\newtheorem{theorem}{Theorem}[section]
\newtheorem{proposition}[theorem]{Proposition}
\numberwithin{equation}{section}
\newcommand{\N}{\mathbb N}
\newcommand{\Z}{\mathbb Z}
\newcommand{\R}{\mathbb R}
\title{A note on the threshold numbers of cycles}
\author{Runze Wang}
\address[]{Department of Mathematical Sciences, University of Memphis, Memphis, TN 38152, USA}
\email{runze.w@hotmail.com}
\thanks{}
\date{\today}
\subjclass[]{}
\begin{document}

\sloppy

\begin{abstract}
    A graph $G=(V,E)$ is said to be a \textit{$k$-threshold graph} with \textit{thresholds} $\theta_1<\theta_2<...<\theta_k$ if there is a map $r: V \longrightarrow \mathbb{R}$ such that $uv\in E$ if and only if $\theta_i\le r(u)+r(v)$ holds for an odd number of $i\in [k]$. The \textit{threshold number} of $G$, denoted by $\Theta(G)$, is the smallest positive integer $k$ such that $G$ is a $k$-threshold graph. In this paper, we determine the exact threshold numbers of cycles by proving
    \[ \Theta(C_n)=\begin{cases} 
          1 & if\ n=3, \\
          2 & if\ n=4, \\
          4 & if\ n\ge 5,
       \end{cases}
    \]
    where $C_n$ is the cycle with $n$ vertices.
\end{abstract}

\maketitle

\section{Introduction}
In this paper, we only consider finite simple graphs. And for $a,b\in \Z$ with $a<b$, we denote $\{i\in \Z: a\le i\le b\}$ by $[a,b]$; for $c\in \N$, we denote $\{j\in \N: 1\le j\le c\}$ by $[c]$.

A graph $G=(V,E)$ is said to be a \textit{threshold graph} if there is a map $f: V \longrightarrow \R$ such that $uv\in E$ if and only if $f(u)+f(v)\ge 0$. Threshold graphs were introduced by Chv\'atal and Hammer \cite{CH77} in 1977, and have been extensively studied \cite{Go,GJ,GT,MP} since then.

Jamison and Sprague \cite{JS} introduced multithreshold graphs as a generalization of threshold graphs. A graph $G=(V,E)$ is said to be a \textit{$k$-threshold graph} with \textit{thresholds} $\theta_1<\theta_2<...<\theta_k$ if there is a map $r: V \longrightarrow \R$ such that $uv\in E$ if and only if $\theta_i\le r(u)+r(v)$ holds for an odd number of $i\in [k]$. We say $r(v)$ is the \textit{rank} of $v$. We say such a rank assignment $r: V \longrightarrow \R$ is a \textit{$(\theta_1,\theta_2,...,\theta_k)$-representation} of $G$.

If $G$ is a $k$-threshold graph, then the $k$ thresholds divide $\R$ into $k+1$ parts in the form
\centerline{NO | YES | NO | YES | ...,}
and every edge rank sum lands in a YES part, every nonedge rank sum lands in a NO part.

Jamison and Sprague \cite{JS} proved that if a graph has $n$ vertices, then it is a $k$-threshold graph for some $k\le{n\choose 2}$. This means for any finite graph $G$, there is a smallest positive integer $\Theta(G)$, called the \textit{threshold number} of $G$, such that $G$ is a $\Theta(G)$-threshold graph.

For specific graphs: Jamison and Sprague \cite{JS} proved that the threshold number of any path is at most $2$, and the threshold number of any caterpillar (obtained by attaching leaves to vertices in a path) is also at most $2$; Chen and Hao \cite{CH22} determined the exact threshold numbers of $m$-partite graphs with every part having at least $m+1$ vertices; Kittipassorn and Sumalroj \cite{KS} determined the exact threshold numbers of multipartite graphs with every part having $3$ vertices or every part having $4$ vertices; Wang \cite{Wa} determined the exact threshold numbers of multipartite graphs with every part having at most $3$ vertices.

In this paper, we determine the exact threshold numbers of cycles.

\begin{theorem} \label{cycles}
    Let $n\ge 3$ be an integer, let $C_n$ be the cycle with $n$ vertices, then
    \[ \Theta(C_n)=\begin{cases} 
          1 & if\ n=3, \\
          2 & if\ n=4, \\
          4 & if\ n\ge 5.
       \end{cases}
    \]
\end{theorem}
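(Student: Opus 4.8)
The plan is to treat the three ranges of $n$ separately. For $n=3$ the graph $C_3=K_3$ is complete, hence a threshold graph (give every vertex rank $1$ and take $\theta_1=1$), so $\Theta(C_3)=1$; for $n=4$, adding the two edge inequalities coming from $v_1v_2$ and $v_3v_4$ and comparing with the two non-edge inequalities coming from $v_1v_3$ and $v_2v_4$ shows $C_4$ is not a threshold graph, while $r(v_1)=r(v_3)=0$, $r(v_2)=r(v_4)=1$ with thresholds $\tfrac12<\tfrac32$ shows $\Theta(C_4)=2$. The substance is the case $n\ge 5$, for which I must prove both $\Theta(C_n)\le 4$ and $\Theta(C_n)\ge 4$.

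For the upper bound I exhibit a $4$-threshold representation. Realize $C_n$ as the path $P=u_1u_2\cdots u_n$ plus the extra edge $u_nu_1$, and use (a rescaling of) the standard fact that paths are $2$-threshold: with $r(u_i)=\lceil i/2\rceil(-1)^{i+1}$ when $n$ is odd (the sign is flipped when $n$ is even), every edge of $P$ has rank-sum in $\{0,1\}$ while every non-edge of $P$ has rank-sum $\le -1$ or $\ge 2$, so the path edges form a tight cluster separated from all other pair-sums. The extra edge $u_nu_1$ has rank-sum $(n+3)/2\ge 4$ (a large-magnitude value of the opposite sign when $n$ is even), which falls among the spread-out non-edge values; after perturbing the ranks by $\varepsilon\cdot 3^{i}$ with $\varepsilon$ a sufficiently small positive real — which makes all $\binom n2$ rank-sums distinct while moving nothing out of its cluster — one can place a tiny second window around that single value avoiding every non-edge value. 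Taking one YES-window to be a short interval around $\{0,1\}$ and the other YES-window to be this tiny interval (the two windows ordered appropriately, which differs between odd and even $n$) produces thresholds $\theta_1<\theta_2<\theta_3<\theta_4$, and I then check directly that the edges land in the two YES-regions and the non-edges in the three NO-regions.

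For the lower bound I first pass to a representation in general position (no rank-sum equal to a threshold, all ranks and all rank-sums distinct); this is harmless, since shifting each threshold down by a tiny generic amount preserves all adjacencies and then the ranks may be perturbed. The case $k=2$ is a warm-up: with two thresholds each vertex's neighbourhood is a block of consecutive vertices in the rank order, and tracing the neighbourhoods of the minimum-rank vertex $a_1$ and of its two neighbours forces either a triangle or an induced $C_4$ in $C_n$, impossible for $n\ge 5$. For $k=3$ the YES-region is $(\theta_1,\theta_2)\cup(\theta_3,\infty)$, so $E(C_n)$ splits as $E_2\sqcup E_1$ with $E_1=\{uv:r(u)+r(v)>\theta_3\}$; since $(V,E_1)$ is exactly the threshold graph with ranks $r$ and threshold $\theta_3$, and threshold graphs contain no induced $2K_2$, $C_4$ or $P_4$, the subgraph $(V,E_1)$ of $C_n$ is a star with at most two edges, so $|E_1|\le 2$ and $E_2\ne\varnothing$; and $E_1\ne\varnothing$ because $C_n$ is not $2$-threshold. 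Next I analyse $a_1$: using the "high-edge" monotonicity (if $r(u)+r(v)>\theta_3$ then $r(w)+r(v)>\theta_3$ for every $w$ with $r(w)>r(u)$, which would make $v$ have large degree) I rule out $a_1$ having any neighbour whose rank-sum lies above $\theta_3$, so $N(a_1)=\{a_p,a_{p+1}\}$ with both edges in $E_2$; the dual analysis of $a_n$ forces $N(a_n)=\{a_{n-2},a_{n-1}\}$ with both edges in $E_1$, hence $E_1=\{a_{n-2}a_n,a_{n-1}a_n\}$ and $E_2$ is precisely the edge set of the path $C_n-a_n$. Applying the $k=2$ block argument to that path — its maximum-rank endpoint $a_{n-1}$ must have a path-neighbour of rank-sum $<\theta_2$, which forces $\{a_1,a_{n-1}\}$ itself to be an edge — pins down $p=n-2$, so $N(a_1)=\{a_{n-2},a_{n-1}\}=N(a_n)$; then $a_1,a_{n-2},a_n,a_{n-1}$ form a $C_4$ in $C_n$, a contradiction. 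Thus $C_n$ is not $3$-threshold, and together with the upper bound $\Theta(C_n)=4$.

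The main obstacle is the $k=3$ lower bound. The two monotone building blocks — edges with rank-sum above $\theta_3$ are stable under raising ranks, non-edges with rank-sum below $\theta_1$ are stable under lowering ranks — are not symmetric, since the region above $\theta_3$ is unbounded while the region below $\theta_1$ is a genuine NO-region, so only one of them yields a strong conclusion ($|E_1|\le 2$). The delicate work is to combine this single rigidity statement with the non-$2$-threshold-ness of $C_n$ and a careful bookkeeping of which of the four regions each pair $\{a_1,a_j\}$ and $\{a_n,a_j\}$ falls into, so as to collapse the whole representation onto the forbidden $C_4$.
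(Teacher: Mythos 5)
Your architecture is genuinely different from the paper's for both lower bounds, and most of it is correct. The paper rules out $k=2$ by a parity chase around the cycle (three cases according to $n\bmod 4$, tracking which NO interval successive non-edge sums occupy), and rules out $k=3$ via a claim that two disjoint edges cannot both have rank sums in the upper YES interval, followed by rank comparisons at specific cycle positions. Your $k=2$ argument (neighbourhoods are rank-intervals; the minimum-rank vertex and its two neighbours force a triangle or a $4$-cycle) is correct and avoids the case split. Your $k=3$ skeleton is also right up to a point: $(V,E_1)$ is the threshold graph at level $\theta_3$, hence a star with $1\le|E_1|\le 2$; no edge at $a_1$ lies in $E_1$ (its partner would have degree $n-1$); and both edges at $a_n$ lie in $E_1$ --- though your ``dual analysis'' needs the following argument rather than a mirror image: one first checks $a_{n-1}a_n\in E_1$ whenever $E_1\neq\varnothing$, and then if $a_nu\in E_2$ every pair sum at $u$ is below $\theta_2$, so $N(u)$ is an up-set in the rank order containing $a_n$, forcing $N(u)=\{a_{n-1},a_n\}$ and the triangle $ua_{n-1}a_n$. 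This yields $E_1=\{a_{n-2}a_n,a_{n-1}a_n\}$ and $E_2=E(C_n-a_n)$ as you claim. The upper-bound construction is essentially the paper's.

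The gap is your final step. Nothing you have established forces $\{a_1,a_{n-1}\}$ to be an edge, so the deduction ``$p=n-2$, hence $N(a_1)=N(a_n)$, hence a $C_4$'' does not go through. Concretely, for $n=5$ all the constraints you have derived to that point (including the interval condition on $N(a_1)$) are satisfied by the cyclic order $a_5,a_3,a_1,a_2,a_4$, in which $a_1a_4=a_1a_{n-1}$ is a non-edge; the contradiction in that configuration comes from $a_2$, whose neighbourhood $\{a_1,a_4\}$ fails to be a rank-interval. The finish is available with your own tools: every $v\in\{a_1,\dots,a_{n-3}\}$ is a non-neighbour of $a_n$, so all pair sums at $v$ lie below $\theta_3$ and $N(v)$ is a rank-interval. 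Let $x$ be the neighbour of $a_{n-1}$ other than $a_n$; then $x\in\{a_1,\dots,a_{n-3}\}$ and $N(x)=\{a_{n-1},x'\}$ is a rank-interval containing the top-ranked vertex $a_{n-1}$ of $C_n-a_n$. If $x'=a_{n-2}$ then $a_na_{n-1}xa_{n-2}$ is a $4$-cycle in $C_n$; otherwise $r(x')<r(a_{n-2})<r(a_{n-1})$ puts $a_{n-2}$ inside the interval, giving $x$ a third neighbour. Either way you reach the contradiction, and with this replacement your proof is complete.
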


\section{Proof of Theorem \ref{cycles}}

For $C_n$, the cycle with $n$ vertices, we randomly pick a vertex and label it $v_1$, then counterclockwise label the other vertices $v_2,v_3,...,v_n$. Note that we can continue this labeling process, so each vertex $v_i$ can also be represented as $v_{i+kn}$ for any $k\in \N$, e.g. $v_1$ and $v_{n+1}$ are the same vertex.

Let us prove Theorem \ref{cycles}.

\begin{proof}[Proof of Theorem \ref{cycles}]
    It is easy to check that $\Theta(C_3)=1$.

    For $C_n$ with $n\ge 4$, we first show that $\Theta(C_n)\ge 2$.
    
    In $C_n$ with $n\ge 4$, $v_1 v_2$ and $v_3 v_4$ are edges, $v_1 v_3$ and $v_2 v_4$ are nonedges. Assume by contradiction that $\Theta(C_n)=1$ and $r: V\longrightarrow \R$ is a $(\theta)$-representation of $C_n$ for some $\theta$. Then by the definition of multithreshold graphs, we have $r(v_1)+r(v_2)\ge \theta$, $r(v_3)+r(v_4)\ge \theta$, $r(v_1)+r(v_3)< \theta$, and $r(v_2)+r(v_4)< \theta$. But then we will have
    \begin{align*}
        (r(v_1)+r(v_2))+(r(v_3)+r(v_4))\ge 2\theta> (r(v_1)+r(v_3))+(r(v_2)+r(v_4)),
    \end{align*}
    contradiction. So $\Theta(C_n)\ge 2$.

    For $C_4$, if we let $\theta_1=0.9$ and $\theta_2=1.1$, and let $r(v_1)=0$, $r(v_2)=1$, $r(v_3)=0$, and $r(v_4)=1$, then every edge rank sum is $1$, and every nonedge rank sum is either $0$ or $2$, so we have a $(0.9,1.1)$-representation of $C_4$. So $\Theta(C_4)\le 2$, and hence $\Theta(C_4)=2$.
    
    Now let us assume $n\ge 5$.

    Firstly, we show $\Theta(C_n)\le 4$ by construction. Let $r(v_i)=(-1)^{i-1}i$ for $i\in [n-1]$, and let $r(v_n)=(-1)^{n-1}(n-0.5)$.

    \textbf{Case I.} $n$ is odd.
    
    In this case $r(v_n)=n-0.5$. We let $\theta_1=-1.1$, $\theta_2=1.1$, $\theta_3=r(v_1)+r(v_n)-0.1=n+0.4$, and $\theta_4=r(v_1)+r(v_n)+0.1=n+0.6$. We have $\theta_1<\theta_2<\theta_3<\theta_4$ because $n\ge 5$. Denote $\{\theta_1,\theta_2,\theta_3,\theta_4\}$ by $\mathcal{S}$.
    
    Then for an edge not involving $v_n$, its rank sum is $-1$ or $1$, which is between $\theta_1$ and $\theta_2$. For edge $v_{n-1}v_n$, its rank sum is $0.5$, which is between $\theta_1$ and $\theta_2$. For edge $v_1 v_n$, its rank sum is $1+(n-0.5)=n+0.5$, which is between $\theta_3$ and $\theta_4$. We have checked every edge rank sum is greater than or equal to an odd number of elements in $\mathcal{S}$. 
    
    For a nonedge not involving $v_n$, assume its two vertices are $v_i$ and $v_j$ with $i,j\in [n-1]$ and $|i-j|\neq 1$, then:
    \begin{itemize}
        \item If $r(v_i)+r(v_j)<0$, then by $|i-j|\neq 1$, we know that $r(v_i)+r(v_j)\le -3<\theta_1$, so $|\{s\in \mathcal{S}: s\le r(v_i)+r(v_j)\}|=0$, an even number.
        \item If $r(v_i)+r(v_j)>0$, then by $|i-j|\neq 1$, we know that $r(v_i)+r(v_j)\ge 3>\theta_2$. And $r(v_i)+r(v_j)$ is an integer, so it is either smaller than $\theta_3$ or greater than $\theta_4$. So $|\{s\in \mathcal{S}: s\le r(v_i)+r(v_j)\}|$ is $2$ or $4$, an even number.
    \end{itemize}

    And for a nonedge involving $v_n$, assume the other vertex of this nonedge is $v_i$ with $i\in [2, n-2]$, then:
    \begin{itemize}
        \item If $r(v_i)>0$, then $r(v_i)$ is at least $3$, so $r(v_i)+r(v_n)\ge 3+(n-0.5)=n+2.5>\theta_4$, so $|\{s\in \mathcal{S}: s\le r(v_i)+r(v_j)\}|=4$, an even number.
        \item If $r(v_i)<0$, then $-(n-3)\le r(v_i)\le -2$. Note that we have $-(n-3)\le -2$ because $n\ge 5$. And then we have $\theta_2<2.5\le r(v_i)+r(v_n)\le n-2.5<\theta_3$, so $|\{s\in \mathcal{S}: s\le r(v_i)+r(v_j)\}|=2$, an even number.
    \end{itemize}
    
    We have also checked every nonedge rank sum is greater than or equal to an even number of elements in $\mathcal{S}$. So we have a $(\theta_1,\theta_2,\theta_3,\theta_4)$-representation of $C_n$, and hence $\Theta(C_n)\le 4$.

    \textbf{Case II.} $n$ is even.
    
    In this case, $r(v_n)=-n+0.5$. We let $\theta_1=r(v_1)+r(v_n)-0.1=-n+1.4$, $\theta_2=r(v_1)+r(v_n)+0.1=-n+1.6$, $\theta_3=-1.1$, and $\theta_4=1.1$. We have $\theta_1<\theta_2<\theta_3<\theta_4$ because $n\ge 5$. Denote $\{\theta_1,\theta_2,\theta_3,\theta_4\}$ by $\mathcal{S}'$. The same as in Case I, we can check every edge rank sum is greater than or equal to an odd number of elements in $\mathcal{S}'$, and every nonedge rank sum is greater than or equal to an even number of elements in $\mathcal{S}'$. So we also have $\Theta(C_n)\le 4$ in this case.

    Now we have proved the upper bound $\Theta(C_n)\le 4$.
    
    And we already know $\Theta(C_n)\ge 2$. So by eliminating the possibility of $\Theta(C_n)=2$ or $\Theta(C_n)=3$, we can conclude that $\Theta(C_n)=4$.

    Firstly assume $\Theta(C_n)=2$. So we have two thresholds $\theta_1, \theta_2$, and a $(\theta_1, \theta_2)$-representation $r: V \longrightarrow \R$. Two thresholds divide $\R$ into three parts in the form
    
    \centerline{NO | YES | NO.}
    
    The two NO parts will be called smaller NO and larger NO. 
    
    We have three cases. The proofs of the first two follow the same pattern, the proof of the third one is slightly different.

    \textbf{Case i.} $2 \nmid n$.

    For vertices $v_1,v_2,v_3,v_4$: We know $v_1 v_2$ and $v_3 v_4$ are edges, so both $r(v_1)+r(v_2)$ and $r(v_3)+r(v_4)$ are in YES. We know $v_1 v_3$ and $v_2 v_4$ are nonedges, so both $r(v_1)+r(v_3)$ and $r(v_2)+r(v_4)$ are in NO. And by the fact that $(r(v_1)+r(v_2))+(r(v_3)+r(v_4))=(r(v_1)+r(v_3))+(r(v_2)+r(v_4))$, we know that $r(v_1)+r(v_3)$ and $r(v_2)+r(v_4)$ are in different NO's. Without loss of generality, we assume $r(v_1)+r(v_3)$ is in smaller NO and $r(v_2)+r(v_4)$ is in larger NO.

    Then for vertices $v_2,v_3,v_4,v_5$: Similarly we know that $r(v_2)+r(v_4)$ and $r(v_3)+r(v_5)$ are in different NO's, and we already assumed $r(v_2)+r(v_4)$ is in larger NO, so $r(v_3)+r(v_5)$ is in smaller NO.

    Repeatedly, we have that if $i$ is odd then $r(v_i)+r(v_{i+2})$ is in smaller NO, if $i$ is even then $r(v_i)+r(v_{i+2})$ is in larger NO. Now we take $i=n+1$, we know $n+1$ is even because $2 \nmid n$, so we have $r(v_{n+1})+r(v_{n+3})$ is in larger NO, but $v_{n+1}$ and $v_1$ represent the same vertex, $v_{n+3}$ and $v_3$ represent the same vertex, so $r(v_1)+r(v_3)$ is in larger NO, which contradicts our assumption that $r(v_1)+r(v_3)$ is in smaller NO.

    \textbf{Case ii.} $2 \mid n$, but $4 \nmid n$.

    Because $2 \mid n$, we know $\frac{n}{2}$ is an integer.

    For vertices $v_1,v_2,v_{\frac{n}{2}+1},v_{\frac{n}{2}+2}$: We know $v_1 v_2$ and $v_{\frac{n}{2}+1} v_{\frac{n}{2}+2}$ are edges, so both $r(v_1)+r(v_2)$ and $r(v_{\frac{n}{2}+1})+r(v_{\frac{n}{2}+2})$ are in YES. We know $v_1 v_{\frac{n}{2}+1}$ and $v_2 v_{\frac{n}{2}+2}$ are nonedges, so both $r(v_1)+r(v_{\frac{n}{2}+1})$ and $r(v_2)+r(v_{\frac{n}{2}+2})$ are in NO. And by the fact that $(r(v_1)+r(v_2))+(r(v_{\frac{n}{2}+1})+r(v_{\frac{n}{2}+2}))=(r(v_1)+r(v_{\frac{n}{2}+1}))+(r(v_2)+r(v_{\frac{n}{2}+2}))$, we know that $r(v_1)+r(v_{\frac{n}{2}+1})$ and $r(v_2)+r(v_{\frac{n}{2}+2})$ are in different NO's. Without loss of generality, we assume $r(v_1)+r(v_{\frac{n}{2}+1})$ is in smaller NO and $r(v_2)+r(v_{\frac{n}{2}+2})$ is in larger NO.

    Then for vertices $v_2,v_3,v_{\frac{n}{2}+2},v_{\frac{n}{2}+3}$: Similarly we know that $r(v_2)+r(v_{\frac{n}{2}+2})$ and $r(v_3)+r(v_{\frac{n}{2}+3})$ are in different NO's, and we already assumed $r(v_2)+r(v_{\frac{n}{2}+2})$ is in larger NO, so $r(v_3)+r(v_{\frac{n}{2}+3})$ is in smaller NO.

    Repeatedly, we have that if $i$ is odd then $r(v_i)+r(v_{\frac{n}{2}+i})$ is in smaller NO, if $i$ is even then $r(v_i)+r(v_{\frac{n}{2}+i})$ is in larger NO. Now we take $i=\frac{n}{2}+1$, we know $\frac{n}{2}+1$ is even because $4 \nmid n$, so we have $r(v_{\frac{n}{2}+1})+r(v_{n+1})$ is in larger NO, but $v_{n+1}$ is the same vertex as $v_1$, so $r(v_{\frac{n}{2}+1})+r(v_1)$ is in larger NO, which contradicts our assumption that $r(v_1)+r(v_{\frac{n}{2}+1})$ is in smaller NO.

    \textbf{Case iii.} $4 \mid n$.
    
    For vertices $v_1,v_2,v_\frac{n}{2},v_{\frac{n}{2}+1}$: We know $v_1 v_2$ and $v_\frac{n}{2} v_{\frac{n}{2}+1}$ are edges, so both $r(v_1)+r(v_2)$ and $r(v_\frac{n}{2})+r(v_{\frac{n}{2}+1})$ are in YES. We know $v_1 v_\frac{n}{2}$ and $v_2 v_{\frac{n}{2}+1}$ are nonedges, so both $r(v_1)+r(v_\frac{n}{2})$ and $r(v_2)+r(v_{\frac{n}{2}+1})$ are in NO. And by the fact that $(r(v_1)+r(v_2))+(r(v_\frac{n}{2})+r(v_{\frac{n}{2}+1}))=(r(v_1)+r(v_\frac{n}{2}))+(r(v_2)+r(v_{\frac{n}{2}+1}))$, we know that $r(v_1)+r(v_\frac{n}{2})$ and $r(v_2)+r(v_{\frac{n}{2}+1})$ are in different NO's. Without loss of generality, we assume $r(v_1)+r(v_\frac{n}{2})$ is in smaller NO and $r(v_2)+r(v_{\frac{n}{2}+1})$ is in larger NO.

    Then for vertices $v_2,v_3,v_{\frac{n}{2}+1},v_{\frac{n}{2}+2}$: Similarly we know that $r(v_2)+r(v_{\frac{n}{2}+1})$ and $r(v_3)+r(v_{\frac{n}{2}+2})$ are in different NO's, and we already assumed $r(v_2)+r(v_{\frac{n}{2}+1})$ is in larger NO, so $r(v_3)+r(v_{\frac{n}{2}+2})$ is in smaller NO.

    Repeatedly, we have that if $i$ is odd then $r(v_i)+r(v_{\frac{n}{2}+i-1})$ is in smaller NO, if $i$ is even then $r(v_i)+r(v_{\frac{n}{2}+i-1})$ is in larger NO. Now we take $i=\frac{n}{2}+1$, we know $\frac{n}{2}+1$ is odd because $4 \mid n$, so $r(v_{\frac{n}{2}+1})+r(v_n)$ is in smaller NO. But then for vertices $v_\frac{n}{2},v_{\frac{n}{2}+1},v_n,v_1$: We know $v_\frac{n}{2} v_{\frac{n}{2}+1}$ and $v_n v_1$ are edges, so both $r(v_\frac{n}{2})+r(v_{\frac{n}{2}+1})$ and $r(v_n)+r(v_1)$ are in YES. We know $v_1 v_\frac{n}{2}$ and $v_{\frac{n}{2}+1} v_n$ are nonedges, and both $r(v_1)+r(v_\frac{n}{2})$ and $r(v_{\frac{n}{2}+1})+r(v_n)$ are in smaller NO. But then we will have $(r(v_\frac{n}{2})+r(v_{\frac{n}{2}+1}))+(r(v_n)+r(v_1))>(r(v_1)+r(v_\frac{n}{2}))+(r(v_{\frac{n}{2}+1})+r(v_n))$, contradiction.

    In each of the three cases, we get a contradiction, so $\Theta(C_n)$ cannot be $2$.

    Then assume $\Theta(C_n)=3$. So we have three thresholds $\theta_1, \theta_2, \theta_3$, and a $(\theta_1, \theta_2, \theta_3)$-representation $r: V \longrightarrow \R$. Three thresholds divide $\R$ into four parts in the form
    
    \centerline{NO | YES | NO | YES.}
    
    The two NO parts will be called smaller NO and larger NO, the two YES parts will be called smaller YES and larger YES.

    Firstly we note that there must be an edge rank sum in larger YES, as otherwise two thresholds would be enough, $\Theta(C_n)=3$ would not hold. We may assume $r(v_1)+r(v_2)$ is in larger YES.

    We also make the following claim.
    
    \textbf{Claim.} Assume $v_i v_{i+1}$ and $v_j v_{j+1}$ are disjoint edges in $C_n$, then $r(v_i)+r(v_{i+1})$ and $r(v_j)+r(v_{j+1})$ cannot both be in larger YES.

    \begin{proof}[Proof of Claim.]
        Because $v_i v_{i+1}$ and $v_j v_{j+1}$ are disjoint edges, we can see that both $v_i v_j$ and $v_{i+1} v_{j+1}$ are nonedges. Assume by contradiction that both $r(v_i)+r(v_{i+1})$ and $r(v_j)+r(v_{j+1})$ are in larger YES. Then by the equation $(r(v_i)+r(v_{i+1}))+(r(v_j)+r(v_{j+1}))=(r(v_i)+r(v_j))+(r(v_{i+1})+r(v_{j+1}))$, we know that $\max\{r(v_i)+r(v_j), r(v_{i+1})+r(v_{j+1})\}\ge \min\{r(v_i)+r(v_{i+1}), r(v_j)+r(v_{j+1})\}$, so at least one of $r(v_i)+r(v_j)$ and $r(v_{i+1})+r(v_{j+1})$ is in larger YES. However, both $v_i v_j$ and $v_{i+1} v_{j+1}$ are nonedges, contradiction.
    \end{proof}
    
    By this claim, we know that edge $v_2 v_3$ and edge $v_n v_1$ cannot both have rank sums in larger YES, because they are disjoint edges. Moreover, any edge $v_i v_{i+1}$ with $i\in [3, n-1]$ cannot have rank sum in larger YES, because $v_1 v_2$ and $v_i v_{i+1}$ are disjoint edges.

    We have two cases:

    \textbf{Case 1.} None of $v_2 v_3$ and $v_n v_1$ has rank sum in larger YES.

    We have that both $r(v_2)+r(v_3)$ and $r(v_n)+(v_1)$ are in smaller YES, so by a similar argument as in the proof of $\Theta(C_n)\neq 2$, we know $r(v_1)+r(v_3)$ and $r(v_2)+r(v_n)$ are in different NO's. Without loss of generality, we may assume $r(v_1)+r(v_3)$ is in smaller NO. 
    
    Because $v_2 v_4$ is a nonedge and $r(v_1)+r(v_2)$ is in larger YES, we have $r(v_2)+r(v_4)<r(v_1)+r(v_2)$, so $r(v_4)<r(v_1)$, which implies $r(v_3)+r(v_4)<r(v_3)+r(v_1)$, and we assumed $r(v_3)+r(v_1)$ is in smaller NO, so $r(v_3)+r(v_4)$ is also in smaller NO, which contradicts the fact that $v_3 v_4$ is an edge.

    \textbf{Case 2.} Exactly one of $v_2 v_3$ and $v_n v_1$ has rank sum in larger YES.

    By symmetry, we may assume $r(v_2)+r(v_3)$ is in larger YES. Now, $r(v_1)+r(v_2)$ and $r(v_2)+r(v_3)$ are in larger YES, and by the claim we just made, all other edge rank sums are in smaller YES. In particular, both $r(v_3)+r(v_4)$ and $r(v_n)+(v_1)$ are in smaller YES, so by a similar argument as in the proof of $\Theta(C_n)\neq 2$, we know $r(v_3)+r(v_n)$ and $r(v_4)+r(v_1)$ are in different NO's. Without loss of generality, we may assume $r(v_3)+r(v_n)$ is in smaller NO.

    Now we look at vertices $v_2,v_3,v_{n-1},v_n$: We know $v_2 v_3$ is an edge with $r(v_2)+r(v_3)$ in larger YES, $v_{n-1} v_n$ is an edge with $r(v_{n-1})+r(v_n)$ in smaller YES. Now, because $r(v_3)+r(v_n)$ is in smaller NO, we have $r(v_3)+r(v_n)<r(v_{n-1})+r(v_n)$. Because $v_2 v_{n-1}$ is a nonedge, we have $r(v_2)+r(v_{n-1})$ is in NO (larger or smaller), and we know $r(v_2)+r(v_3)$ is in larger YES, so $r(v_2)+r(v_{n-1})<r(v_2)+r(v_3)$. But then we have $(r(v_3)+r(v_n))+(r(v_2)+r(v_{n-1}))<(r(v_{n-1})+r(v_n))+(r(v_2)+r(v_3))$, contradiction.

    Note that if $n=5$, then $v_4$ and $v_{n-1}$ are the same vertex, but this does not affect our argument.

    In either case, we get a contradiction, so $\Theta(C_n)$ cannot be $3$.

    In summary, for $n\ge 5$, We have showed $\Theta(C_n)\ge 2$, $\Theta(C_n)\le 4$, $\Theta(C_n)\neq 2$, and $\Theta(C_n)\neq 3$, thus $\Theta(C_n)=4$.
\end{proof}

\section{Remarks}
Actually there is another way to see that $\Theta(C_n)\le 4$, which is given by combining the following two propositions and the fact that we can get $C_n$ by adding an edge to $P_n$.

\begin{proposition}[Jamison and Sprague \cite{JS}]
    Let $n$ be a positive integer, let $P_n$ be the path with $n$ vertices, then $\Theta(P_n)\le 2$.
\end{proposition}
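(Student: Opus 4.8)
The plan is to exhibit an explicit $2$-threshold representation of $P_n$, reusing the alternating rank assignment that appears in the construction for $C_n$ above. Label the vertices of $P_n$ as $v_1,v_2,\dots,v_n$ along the path, so that the edges are exactly the pairs $v_iv_{i+1}$ with $i\in[n-1]$. I would set $r(v_i)=(-1)^{i-1}i$ for every $i\in[n]$, and take thresholds $\theta_1=-1.5$ and $\theta_2=1.5$. With $k=2$ thresholds the YES part is the interval $[\theta_1,\theta_2)$, so the goal is to show that $r(v_i)+r(v_j)\in[\theta_1,\theta_2)$ exactly when $v_iv_j$ is an edge.

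First I would check the edges. For $i\in[n-1]$ a direct computation gives $r(v_i)+r(v_{i+1})=(-1)^{i-1}i+(-1)^i(i+1)=(-1)^i$, so every edge rank sum equals $\pm 1$, which lies in $[\theta_1,\theta_2)$; hence exactly one threshold (namely $\theta_1$) is $\le r(v_i)+r(v_{i+1})$, an odd number, as required.

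Next I would handle the non-edges. Let $v_iv_j$ be a non-edge, say with $j>i$ and $j-i\ge 2$; it suffices to show $|r(v_i)+r(v_j)|\ge 3$, since then $r(v_i)+r(v_j)$ lands in the smaller NO part (if it is $\le -3$, with $0$ thresholds below it) or in the larger NO part (if it is $\ge 3$, with $2$ thresholds below it), in either case an even number. This splits into two easy parity cases. If $j-i$ is even then $(-1)^{j-1}=(-1)^{i-1}$ and $r(v_i)+r(v_j)=(-1)^{i-1}(i+j)$, of magnitude $i+j\ge 1+3=4$. If $j-i$ is odd, hence $\ge 3$, then $(-1)^{j-1}=-(-1)^{i-1}$ and $r(v_i)+r(v_j)=(-1)^{i-1}(i-j)$, of magnitude $j-i\ge 3$. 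Either way $|r(v_i)+r(v_j)|\ge 3$.

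Having verified both directions, $r$ is a $(\theta_1,\theta_2)$-representation of $P_n$, whence $\Theta(P_n)\le 2$. There is no real obstacle here: the only things needing a little care are keeping the parity bookkeeping straight in the non-edge case, and observing that the degenerate cases $n\le 2$ (where $P_n$ has at most one edge and no non-edge) are covered by the very same formula, so no separate treatment is needed.
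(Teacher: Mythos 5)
Your construction is correct: every edge rank sum is $(-1)^i=\pm 1$, which lies in the single YES interval $[-1.5,1.5)$, and your parity argument correctly shows every non-edge rank sum has absolute value at least $3$, hence sits above an even number of thresholds. The paper only cites this proposition from Jamison and Sprague without proof, but your rank assignment $r(v_i)=(-1)^{i-1}i$ is exactly the one the paper uses on $v_1,\dots,v_{n-1}$ in its four-threshold construction for $C_n$, so this is essentially the same approach restricted to the path.
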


\begin{proposition}[Jamison and Sprague \cite{JS}]
    Let $G=(V,E)$ be a graph with $\Theta(G)=k$, and let $u,v\in V$ be distinct vertices.

    (a) If $uv$ is an edge, then the graph $G-uv$ obtained by deleting edge $uv$ from $E$ has threshold number at most $k+2$.

    (b) If $uv$ is a nonedge, then the graph $G+uv$ obtained by adding edge $uv$ to $E$ has threshold number at most $k+2$.
\end{proposition}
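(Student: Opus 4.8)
The plan is to take a $(\theta_1,\dots,\theta_k)$-representation $r$ of $G$ and, \emph{without touching $r$}, adjoin two new thresholds placed so that the status of the single pair $\{u,v\}$ flips while every other pair keeps its status. Note first that parts (a) and (b) are really the same statement: a pair $xy$ is an edge of the represented graph exactly when $|\{i:\theta_i\le r(x)+r(y)\}|$ is odd, so whether $uv$ was an edge or a nonedge, all we must do is change the parity of this count at the value $r(u)+r(v)$ and leave it unchanged at every other pairwise sum; the former produces $G-uv$, the latter produces $G+uv$.

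The construction hinges on being able to isolate the number $s:=r(u)+r(v)$ from the other pairwise sums, so the first step is a general-position reduction. I would begin by replacing every $\theta_i$ with $\theta_i-\delta$ for a single small $\delta>0$. Because only finitely many pairwise rank sums occur, $\delta$ can be chosen so that no rank sum lies in $[\theta_i-\delta,\theta_i)$ for any $i$; one then checks that the shifted thresholds represent the same graph $G$ and that no rank sum equals any threshold. Next I would perturb $r$ to $r+\varepsilon$ with $\|\varepsilon\|_\infty$ small enough to keep every rank sum on the same side of every (shifted) threshold — this keeps the graph unchanged — and with $\varepsilon$ chosen generically inside that small box, which makes the $\binom{|V|}{2}$ pairwise rank sums pairwise distinct, since each possible coincidence $r(x)+r(y)=r(x')+r(y')$ is an equation cutting out a proper affine subspace. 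After this reduction, $s=r(u)+r(v)$ is strictly separated from every other pairwise rank sum and from every threshold.

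Now I would let $a$ be the largest element below $s$, and $b$ the smallest element above $s$, in the finite set consisting of all (shifted) thresholds together with all pairwise rank sums, and pick $\theta^{-}\in(a,s)$ and $\theta^{+}\in(s,b)$, so that the open interval $(\theta^{-},\theta^{+})$ contains $s$ but no other pairwise rank sum and no threshold. Taking the new threshold list to be the shifted $\theta_i$'s together with $\theta^{-}$ and $\theta^{+}$ — a set of $k+2$ reals — and keeping the rank assignment $r$, I would finish by checking the two situations. For a pair $xy$ with $r(x)+r(y)=t\ne s$ we have $t<\theta^{-}$ or $t>\theta^{+}$, so either none or both of $\theta^{-},\theta^{+}$ lie weakly below $t$; the parity of $|\{\text{thresholds}\le t\}|$ is unaffected and $xy$ keeps its status. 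For the pair $\{u,v\}$, exactly one of the two new thresholds, namely $\theta^{-}$, lies below $s$, so the parity flips; hence $\{u,v\}$ becomes a nonedge in case (a) and an edge in case (b). This exhibits a $(k+2)$-threshold representation of $G-uv$, respectively $G+uv$, which is what is claimed.

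The one place that needs care is the general-position reduction, and in particular the possibility that in the original representation some rank sum sits exactly on a threshold $\theta_i$; nudging the thresholds \emph{downward} first — so that such a sum stays weakly above $\theta_i-\delta$ and therefore remains counted — is what makes the subsequent perturbation of $r$ safe. Everything after that is routine interval bookkeeping.
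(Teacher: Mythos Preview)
The paper does not give its own proof of this proposition; it is quoted without proof as a result of Jamison and Sprague \cite{JS} and used only to remark on an alternative route to the bound $\Theta(C_n)\le 4$. So there is nothing in the paper to compare your argument against.

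That said, your argument is correct and is the natural one: after a general-position perturbation, the value $s=r(u)+r(v)$ is strictly isolated from all other pairwise sums and from all thresholds, so bracketing it by two fresh thresholds $\theta^-<s<\theta^+$ flips the parity of the threshold count at $s$ and at no other pairwise sum. Your handling of the perturbation is sound --- shifting the thresholds down by a small $\delta$ first, so that rank sums sitting exactly on a $\theta_i$ remain weakly above the shifted threshold, and only then nudging $r$ generically inside a box small enough to preserve all strict inequalities. One cosmetic point: when you define $a$ as the largest element below $s$ and $b$ as the smallest above, you should note (or sidestep) the trivial edge case where $s$ is extremal among all sums and thresholds, in which case one simply takes any $\theta^-<s$ or any $\theta^+>s$; this does not affect the argument.
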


The author adopted the constructive proof, as it helps readers understand how things work.

\end{document}